\documentclass[reqno]{amsart}
\usepackage{amsthm,amsmath,amssymb}
\usepackage{stmaryrd,mathrsfs}
\usepackage{esint}
\usepackage{tikz}
\usepackage{hyperref}
\usepackage{enumerate}

\allowdisplaybreaks

\newcommand{\ud}[0]{\,\mathrm{d}}

\newcommand{\abs}[1]{|#1|}

\newcommand{\Norm}[2]{\|#1\|_{#2}}

\newcommand{\BNorm}[2]{\Big\|#1\Big\|_{#2}}

\newcommand{\ave}[1]{\langle #1\rangle}

\newcommand{\bddlin}[0]{\mathscr{L}}

\newcommand{\BMO}[0]{\operatorname{BMO}}
\newcommand{\supp}[0]{\operatorname{supp}}
\newcommand{\loc}[0]{\operatorname{loc}}

\newcommand{\R}{\mathbb{R}}
\newcommand{\C}{\mathbb{C}}
\newcommand{\N}{\mathbb{N}}

\newcommand{\eps}[0]{\varepsilon}

\swapnumbers \numberwithin{equation}{section}

\theoremstyle{plain}
\newtheorem{theorem}[equation]{Theorem}
\newtheorem{proposition}[equation]{Proposition}
\newtheorem{corollary}[equation]{Corollary}
\newtheorem{lemma}[equation]{Lemma}

\theoremstyle{definition}
\newtheorem{definition}[equation]{Definition}

\theoremstyle{remark}
\newtheorem{remark}[equation]{Remark}

\makeatletter
\@namedef{subjclassname@2010}{%
  \textup{2010} Mathematics Subject Classification}
\makeatother

\begin{document}

\title[Schatten characterisations of commutators]{$A_\infty$-invariance of oscillatory norms, and\\ Schatten characterisations of commutators}


\author{Tuomas Hyt\"onen}
\address{Aalto University,
Department of Mathematics and Systems Analysis,
P.O. Box 11100, FI-00076 Aalto, Finland}
\email{tuomas.hytonen@aalto.fi}

\thanks{The author was supported by the Research Council of Finland, projects 364208 and 371637.}

\keywords{Commutator, Schatten class, Bessel operator, $A_\infty$ condition}
\subjclass[2020]{42B20, 42B35, 46E36, 47B10, 47B47}




\begin{abstract}
Schatten class properties of commutators $[b,T]$ of pointwise multipliers $b$ and singular integral operators $T$
have been characterised in a variety of settings. An abstract framework, covering many of these results as special cases, was proposed by the author [arXiv:2411.02613]. However, recent results about commutators of the concrete Bessel--Riesz transforms by Fan--Li--Sukochev--Zanin [arXiv:2411.14928] are beyond this abstract setting.

In this work, we present an extension of the framework of [arXiv:2411.02613], introducing two measures $\mu$ and $\nu$ that are $A_\infty$-equivalent to each other. The commutators act on a given space $L^2(\mu)$, but the characterising function space norms of the multiplier $b$ are taken with respect to another measure $\nu$. In this way, assumptions like Ahlfors regularity and Poincar\'e inequality on the original measure $\mu$ may be relaxed, as long as there is an $A_\infty$-equivalent measure $\nu$ that satisfies these assumptions. In the Bessel example, the original $\mu$ fails to be Ahlfors regular, but $\nu$ is simply the Lebesgue measure.

Within this framework, the Schatten norm characterisations of commutators of the Bessel--Riesz transforms at the critical-index by Fan--Li--Sukochev--Zanin [op cit.]\ are recovered by a completely different argument, replacing non-commutative techniques by real-variable harmonic analysis and hardly using any specifics of the Bessel setting. As a by-product, we also obtain a simpler characterisation in the non-critical case, replacing an {\em ad hoc} Besov space of Fan--Lacey--Li--Xiong [J. Funct. Anal. 2026] by a classical Besov space.
\end{abstract}

\maketitle

\tableofcontents

\section{Introduction}

Different mapping properties of commutators
\begin{equation*}
  [b,T]:f\mapsto bT(f)-T(bf)
\end{equation*}
of pointwise multipliers $b$ and singular integrals $T$ play a role in various areas of mathematics, like factorisation of function spaces \cite{CRW:76} and compensated compactness in nonlinear PDE \cite{CLMS}. Specifically, questions of quantitative compactness of $[b,T]$, measured in terms of Schatten--Lorentz $S^{p,q}$ norms (Definition \ref{def:Spq}), arise in connection with the quantised calculus of Connes \cite[Chapter 4]{Connes:book} and are particularly relevant for the critical parameters $(p,q)=(d,\infty)$, where $d$ is an underlying dimension \cite{CST,Frank,FSZ:23,LMSZ}.

Over the past few years, classical characterisations \cite{JW:82,RS:NWO} of commutator Schatten properties have been extended to various new settings like weighted Euclidean spaces \cite{GLW:23},
Heisenberg groups \cite{FLL:23,FLMSZ}, more general Carnot groups \cite{LXY},
and the so-called Bessel setting \cite{FLLX}. Besides its intrinsic interest, the last-mentioned case serves as a useful model for a general theory, featuring a concrete example of a doubling metric measure space with different upper and lower dimensions, thus failing to be Ahlfors regular (see Definitions \ref{def:metric} and \ref{def:Ahlfors}).

A unified framework covering many of these previous results as special cases was provided in \cite{Hyt:Sp}; however, within three weeks (in terms of the original arXiv dates in November 2024), it was challenged by \cite{FLSZ} with a characterisation for the concrete Bessel--Riesz transforms in the important critical-index case, which was not covered by the abstract assumptions of \cite{Hyt:Sp}. The goal of this paper is to place these bounds of \cite{FLSZ} into a further general framework, a natural extension of \cite{Hyt:Sp}. This leads to a new proof of these bounds, quite transparent in our opinion, and offers a new explanation of the underlying phenomenon in a broader context. At the same time, the scope of the bounds of \cite{FLSZ} is significantly expanded from the concrete Bessel setting to a wide class of operators and spaces.

We should note that, besides the uniform bounds of \cite[Theorem 1.2]{FLSZ}, the same paper also proves related spectral asymptotics in \cite[Theorem 1.3]{FLSZ}, and these latter remain beyond the scope of our approach. However, to obtain these asymptotics for all relevant functions rather than just test functions, the uniform bounds of \cite[Theorem 1.2]{FLSZ} are used in the proof of the asymptotics in \cite[Theorem 1.3]{FLSZ}. Our present results, extending \cite[Theorem 1.2]{FLSZ}, not only offer an alternative (simpler, in our opinion) approach to this major component of the proof of \cite[Theorem 1.3]{FLSZ}, but they should similarly serve as a step towards the proof of spectral asymptotics in new situations. In prospective new cases where spectral asymptotics for commutators are hoped for, the specifics of the situation may still be needed for the eventual asymptotic considerations, while the uniform non-asymptotic bounds will be available from the general theory presented here.

To discuss our contributions in more detail, we recall that the following norm equivalence (first due to \cite{RS:NWO} in the Euclidean context, and recently extended in \cite{Hyt:Sp}) holds in great generality:
\begin{equation}\label{eq:Spq-Opq}
  \Norm{[b,T]}{S^{p,q}(L^2(\mu))}\sim\Norm{b}{\operatorname{Osc}^{p,q}(\mu)},
\end{equation}
where $S^{p,q}(L^2(\mu))$ is the usual Schatten--Lorentz space of operators on $L^2(\mu)$ (Definition \ref{def:Spq}) and $\operatorname{Osc}^{p,q}(\mu)$ is an oscillatory norm space recalled in Definition \ref{def:Osc-pq}. Here and throughout the paper, we use the following convention. Whenever $X$ is a (quasi-)normed space, we write $\Norm{f}{X}:=\infty$ for $f\notin X$. Using this convention, we see that the concise equivalence ``$\Norm{f}{X}\sim\Norm{g}{Y}$'' as in \eqref{eq:Spq-Opq} encodes the longer statement ``$f\in X$ if and only if $g\in Y$, and in this case $\Norm{f}{X}\sim\Norm{g}{Y}$''.

While the equivalence \eqref{eq:Spq-Opq} is very general, the identification of the abstract oscillation norm on the right with more familiar, and maybe more useful, function space norms is only available under more restrictive assumptions on the underlying space and the measure $\mu$. Notably, in the important critical-index case of the parameters $(p,q)$, an identification of the oscillation norm with a suitable Sobolev-type norm (see Definition \ref{def:Hajlasz}),
\begin{equation*}
  \Norm{b}{\operatorname{Osc}^{d,\infty}(\mu)}\sim\Norm{b}{\dot M^{1,d}(\mu)},
\end{equation*}
was established in \cite{Hyt:Sp,HK:W1p} for Ahlfors $d$-regular measures $\mu$ (Definition \ref{def:Ahlfors}). This Ahlfors regularity fails for the Bessel measure $\ud m_\lambda=x_{n+1}^{2\lambda}\ud x$ on $\R^{n+1}_+:=\R^n\times(0,\infty)$, yet \cite{FLSZ} were still able to prove (for the Bessel--Riesz transforms $R_{\lambda,j}$ in place of $T$) that
\begin{equation*}
  \Norm{[b,T]}{S^{d,\infty}(L^2(\R_+^{n+1},m_\lambda))}\sim\Norm{b}{\dot W^{1,d}(\R_+^{n+1})},
  \qquad d=n+1.
\end{equation*}
Here, the Sobolev space $\dot W^{1,d}(\R_+^{n+1})$ on the right is the classical one with respect to the unweighted Lebesgue measure, not with respect to the Bessel measure $m_\lambda$ appearing on the left. The Lebesgue measure on the half-space $\R_+^{n+1}$ is evidently Ahlfors regular; hence, there is still some hidden Ahlfors-regularity in the setting of \cite{FLSZ}, even if not in a form that would allow a direct application of \cite{Hyt:Sp,HK:W1p}.

In this paper, we provide an additional ingredient that is needed to bring the Schatten bounds of \cite{FLSZ} under the scope of \cite{Hyt:Sp,HK:W1p} by means of the following invariance of the oscillation norms under $A_\infty$ changes of the measure:

\begin{proposition}\label{prop:Osc(mu)=Osc(nu)}
Let $\mu,\nu$ be two doubling measures on a metric space $(X,\rho)$ such that they satisfy the $A_\infty$ relation
\begin{equation}\label{eq:Ainfty}
  \mu(E)\leq\eps\mu(B)\quad\Rightarrow\quad\nu(E)\leq(1-\delta)\nu(B)
\end{equation}
for some $\delta,\eps\in(0,1)$, whenever $E\subset B$ is a measurable subset of a metric ball $B$.

Then for all $p\in(0,\infty)$ and $q\in(0,\infty]$, the oscillatory norms of Definition \ref{def:Osc-pq} satisfy
\begin{equation*}
   \Norm{b}{\operatorname{Osc}^{p,q}(\mu)}
   \sim\Norm{b}{\operatorname{Osc}^{p,q}(\nu)}
\end{equation*}
for all measurable functions $b$, with implied constants independent of $b$.
\end{proposition}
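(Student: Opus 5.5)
The plan is to pass through a measure-insensitive quantity, the \emph{local (median-type) oscillation}. For a ball or dyadic cube $Q$ and $\lambda\in(0,1)$ set
\begin{equation*}
  \omega^\mu_\lambda(b;Q):=\inf_{c\in\C}\inf\{t>0:\ \mu(Q\cap\{|b-c|>t\})\leq\lambda\mu(Q)\},
\end{equation*}
and similarly $\omega^\nu_\lambda$. The proof has two independent parts. The first shows that, for each fixed small $\lambda$ (say $\lambda\leq\frac14$), the sequence norm $\Norm{\{\omega^\mu_\lambda(b;Q)\}_Q}{\ell^{p,q}}$ (over the dyadic systems used in Definition \ref{def:Osc-pq}) is equivalent to $\Norm{b}{\operatorname{Osc}^{p,q}(\mu)}$, with constants depending only on $\lambda,p,q$ and the doubling constant. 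The second shows that $\omega^\nu_{\lambda'}(b;Q)\leq\omega^\mu_{\lambda}(b;Q)$ for suitable $\lambda,\lambda'$, and symmetrically. Combining the two parts gives the proposition.

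\textbf{Comparison of the oscillations.} For the second part I first upgrade \eqref{eq:Ainfty} in two ways. First, I transfer it from balls to dyadic cubes by sandwiching each cube between balls of comparable radius and using doubling of both measures; this changes only the constants $\eps,\delta$. Second, I use the standard self-improvement of $A_\infty$: for every $\lambda'\in(0,1)$ there is $\lambda\in(0,1)$ such that $\mu(E)\leq\lambda\mu(Q)$ implies $\nu(E)\leq\lambda'\nu(Q)$. I will also use its symmetry, namely that the same relation holds with $\mu$ and $\nu$ interchanged. Both facts are classical consequences of the fact that the $A_\infty$ relation between doubling measures is an equivalence relation, proved by a Calder\'on--Zygmund stopping argument, and I would quote or reprove them. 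Now fix $c$ and $t$ with $\mu(Q\cap\{|b-c|>t\})\leq\lambda\mu(Q)$. Applying the improved relation to $E=Q\cap\{|b-c|>t\}$ gives $\nu(E)\leq\lambda'\nu(Q)$, hence $\omega^\nu_{\lambda'}(b;Q)\leq\omega^\mu_\lambda(b;Q)$, and the reverse inequality holds with the roles of the measures exchanged.

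\textbf{Equivalence with $\operatorname{Osc}^{p,q}$.} The first part is where the real work lies, and I expect it to be the main obstacle. The easy direction is Chebyshev's inequality: $\omega^\mu_\lambda(b;Q)\lesssim_\lambda$ the $L^r(\mu)$-oscillation on $Q$ that appears in Definition \ref{def:Osc-pq}. For the converse I would use Lerner's local oscillation decomposition, adapted to dyadic cubes in a doubling metric measure space. It says that for a.e.\ $x\in Q_0$,
\begin{equation*}
  |b(x)-m_b(Q_0)|\lesssim\sum_{P\in\mathcal S(Q_0)}\omega^\mu_{\lambda}(b;\hat P)1_P(x),
\end{equation*}
where $\mathcal S(Q_0)$ is a sparse family of subcubes, $\hat P$ is the dyadic parent of $P$, and $m_b(Q_0)$ is a median. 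Taking the $L^r(\mu)$ average over $Q_0$ then bounds the oscillation on $Q_0$ by a Carleson-weighted average $\sum_{P\subset Q_0}\frac{\mu(P)}{\mu(Q_0)}\,\omega^\mu_\lambda(b;\hat P)$, suitably powered by $r$. Sparseness and doubling give the Carleson packing $\sum_{P\in\mathcal S,\,P\subset Q}\mu(P)\lesssim\mu(Q)$, and the children/parent shift costs only a bounded multiplicity.

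It remains to show that the resulting positive ``averaging'' operator on sequences indexed by cubes, $\{a_P\}\mapsto\{\sum_{P\subset Q}\frac{\mu(P)}{\mu(Q)}a_P\}_Q$ restricted to sparse descendants, is bounded on $\ell^{p,q}$ for every $p\in(0,\infty)$. For $p\leq1$ (or $p\leq r$) I would use the $p$-triangle inequality together with the packing condition. For larger $p$ I would use Hölder's inequality with a geometric decay in the generation gap, which comes from sparseness. I would then pass to Lorentz $q$ by real interpolation between two such $\ell^p$ estimates. The delicate point is precisely this: obtaining a decay factor strong enough to sum over all $Q\supset P$. If the simple Carleson argument falls short for large $p$, I would first try to exploit that the sparse families can be chosen uniformly from a stopping construction, which makes the counting of ancestors geometric.
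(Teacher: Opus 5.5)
\textbf{The gap is in your first part, which carries the whole argument.} You never prove the hard direction
$\Norm{\{\operatorname{osc}_{1,\nu}(b;B_Q)\}_{Q}}{\ell^{p,q}}\lesssim\Norm{\{\omega^\nu_{\lambda'}(b;B_Q)\}_{Q}}{\ell^{p,q}}$, nor its $\mu$-analogue. This is essentially an ``$r=0$'' endpoint of Lemma~\ref{lem:Osc(r)=Osc(s)}, and nothing in the paper provides it.

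The mechanism you sketch also does not deliver it. Lerner's formula gives
$\operatorname{osc}_{1,\nu}(b;Q_0)\lesssim\sum_{P\in\mathcal S(Q_0)}\frac{\nu(P)}{\nu(Q_0)}\,\omega^\nu_{\lambda'}(b;\hat P)$,
where the sparse family depends on $Q_0$ and on $b$. Sparseness controls the sum over $P$ only for a \emph{fixed} $Q_0$. After the $p$-triangle inequality ($p\leq 1$) or H\"older ($p>1$), you must sum, for a fixed $P$, over all dyadic ancestors $Q_0\supseteq P$. For that you need $\nu(P)/\nu(Q_0)$ to decay geometrically in the generation gap, which is a reverse doubling property of the measure.

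That decay does not come from sparseness: the stopping layer of $P$ in $\mathcal S(Q_0)$ can stay bounded while $Q_0$ runs through ever larger ancestors. Nor does it follow from doubling alone. So the problem is not confined to large $p$; already for $p\leq 1$ the packing condition is the wrong tool. Where reverse doubling holds (e.g.\ $\R^d$), your H\"older argument does close, with the Lorentz case obtained by interpolating the linear operator with the families frozen. But that is an extra result you would have to establish, not a routine step.

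\textbf{The idea you are missing is that medians are not needed at all.} By Proposition~\ref{prop:KK}, $\ud\nu=w\ud\mu$, where $w$ satisfies a reverse H\"older inequality with some exponent $t>1$. H\"older's inequality then gives, ball by ball,
\begin{equation*}
  \operatorname{osc}_{r,\nu}(f;B)\lesssim\operatorname{osc}_{rt',\mu}(f;B),
\end{equation*}
because the factors $(\mu(B)/\nu(B))^{1/r}$ and $(\fint_B w^t\ud\mu)^{1/(tr)}\lesssim(\nu(B)/\mu(B))^{1/r}$ cancel. So the $A_\infty$ change of measure only raises the integrability exponent from $r$ to $rt'$. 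Lemma~\ref{lem:Osc(r)=Osc(s)}, the equivalence of $\operatorname{Osc}^{p,q}_r(\mu)$ for all \emph{finite} $r$, brings it back down, and the symmetry of \eqref{eq:Ainfty} gives the converse.

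Your second part is correct. However, the self-improved set-level $A_\infty$ property it uses is most naturally derived from this same reverse H\"older inequality, via $\nu(E)/\nu(B)\lesssim(\mu(E)/\mu(B))^{1/t'}$. The detour through medians therefore gains nothing and leaves you needing the unproven endpoint.
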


This can be thought of as an extension of a classical $A_\infty$-invariance result of \cite[Theorem 5]{MW:75} about $\BMO(\mu)$, which formally corresponds to $\operatorname{Osc}^{\infty,\infty}(\mu)$. Proposition \ref{prop:Osc(mu)=Osc(nu)} is based on similar ideas, but seems to be new, to our knowledge. But perhaps the main novelty here is using Proposition \ref{prop:Osc(mu)=Osc(nu)} in combination with \eqref{eq:Spq-Opq} and the characterisations of $\operatorname{Osc}^{p,q}(\nu)$ from \cite{Hyt:Sp,HK:W1p} to obtain the following extension of \cite[Theorem 1.1]{Hyt:Sp}, where we characterise Schatten norms of the commutator $[b,T]$ on $L^2(\mu)$ in terms of function space norms with another measure~$\nu$. The usefulness of this change of measure becomes apparent in the application to the Bessel setting further below.

\begin{theorem}\label{thm:Sp-nu}
Let $d\in[1,\infty)$, let $(X,\rho)$ be a metric space, and suppose that
\begin{enumerate}[\rm(i)]
  \item\label{it:mu-nu} $\mu$ and $\nu$ are doubling measures on $(X,\rho)$ that satisfy the $A_\infty$ relation \eqref{eq:Ainfty};
  \item\label{it:sep-dim} every ball $B(x,R)$ in $X$ contains at most $C(R/r)^d$ points $x_i$ with $\rho(x_i,x_j)\geq r$, uniformly in $x\in X$ and $0<r\leq R<\infty$; and
  \item\label{it:CZ} $T\in\bddlin(L^2(\mu))$ is a singular integral operator
\begin{equation*}
  Tf(x)=\int_X K(x,y)f(y)\ud\mu(y),\qquad x\notin\supp f,
\end{equation*}
 with non-degenerate standard kernel $K$ of H\"older exponent $\eta>(1-\frac d2)_+$ (Definition \ref{def:CZK}), where $x_+:=\max\{x,0\}$.
\end{enumerate}
Then the following conclusions are valid for all functions $b\in L^1_{\loc}(\mu)$:
\begin{enumerate}[\rm(1)]\setcounter{enumi}{-1}
  \item\label{it:p>d} If $p\in(d,\infty)$, then
\begin{equation}\label{eq:p>d}
     \Norm{[b,T]}{S^p(L^2(\mu))}
     \sim
     \Norm{b}{\dot B^p(\nu)}:=\Big(\iint_{X\times X}\frac{\abs{b(x)-b(y)}^p}{\nu(B(x,\rho(x,y)))^2}
  \ud\nu(x)\ud\nu(y)\Big)^{\frac 1p}.
\end{equation}

    \item\label{it:p>dA} If $p\in(d,\infty)$ and $(X,\rho,\nu)$ is Ahlfors $d$-regular, then
\begin{equation*}
     \Norm{[b,T]}{S^p(L^2(\mu))}
    \sim\Norm{b}{\dot B^{\frac dp}_{p,p}(\nu)}:=\Big(\iint_{X\times X}\frac{\abs{b(x)-b(y)}^p}{\rho(x,y)^{2d}}
  \ud\nu(x)\ud\nu(y)\Big)^{\frac 1p}.
\end{equation*}

  \item\label{it:p<d} If $d>1$ and $(X,\rho,\nu)$ has lower dimension $d$ and supports the $(1,d)$-Poincar\'e inequality (Definition \ref{def:Poincare}), then
\begin{equation}\label{eq:b=const}
  [b,T]\in S^p(L^2(\mu))\quad\Leftrightarrow\quad b=\text{constant a.e.},\quad
  \text{for all }p\in(0,d].
\end{equation}

  \item\label{it:p=d}
  If $d>1$ and $(X,\rho,\nu)$ is Ahlfors $d$-regular and supports the $(1,d_-)$-Poincar\'e inequality (Definition \ref{def:Poincare}) for some $d_-\in[1,d)$, then
\begin{equation*}
     \Norm{[b,T]}{S^{d,\infty}(L^2(\mu))}
    \sim\Norm{b}{\dot M^{1,d}(\nu)},
\end{equation*}
where $\dot M^{1,d}(\nu)$ is the Haj\l{}asz--Sobolev space (Definition \ref{def:Hajlasz}).

  \item\label{it:A2} If $w\in A_2(\mu)$ (Definition \ref{def:A2}), then all claims above remain valid with the $L^2(\mu)$  space on the left replaced by the weighted space $L^2(w)=L^2(w\ud\mu)$.
\end{enumerate}
\end{theorem}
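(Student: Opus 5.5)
The plan is to chain three facts: the general equivalence \eqref{eq:Spq-Opq} on $L^2(\mu)$, the change of measure in Proposition \ref{prop:Osc(mu)=Osc(nu)}, and the known identifications of $\operatorname{Osc}^{p,q}(\nu)$ with concrete function spaces from \cite{Hyt:Sp,HK:W1p}.

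First, under \eqref{it:mu-nu}--\eqref{it:CZ} (doubling $\mu$, the separation/dimension hypothesis \eqref{it:sep-dim}, and a non-degenerate standard kernel with $\eta>(1-\frac d2)_+$), the abstract result of \cite{Hyt:Sp} gives
\begin{equation*}
\Norm{[b,T]}{S^{p,q}(L^2(\mu))}\sim\Norm{b}{\operatorname{Osc}^{p,q}(\mu)}
\end{equation*}
in the relevant ranges: $q=p$ for $p\in(0,\infty)$, and $(p,q)=(d,\infty)$. We have to check that the hypotheses of \cite{Hyt:Sp} are exactly \eqref{it:sep-dim}--\eqref{it:CZ} with $\mu$ doubling; notably, no Ahlfors regularity of $\mu$ is needed for the abstract equivalence. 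This is the point of the whole approach. Proposition \ref{prop:Osc(mu)=Osc(nu)} then replaces $\operatorname{Osc}^{p,q}(\mu)$ by $\operatorname{Osc}^{p,q}(\nu)$, because $\mu,\nu$ are doubling and $A_\infty$-related. Hence
\begin{equation*}
\Norm{[b,T]}{S^{p,q}(L^2(\mu))}\sim\Norm{b}{\operatorname{Osc}^{p,q}(\nu)}
\end{equation*}
for every $b\in L^1_{\loc}(\mu)$. One subtlety is that $b\in L^1_{\loc}(\mu)$ need not be in $L^1_{\loc}(\nu)$ a priori. Since $A_\infty$-related measures are mutually absolutely continuous, $b$ is at least $\nu$-measurable. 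The oscillation norms are defined for measurable functions, so the Proposition applies as stated.

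Second, we identify $\operatorname{Osc}^{p,q}(\nu)$ in each case, using only properties of $(X,\rho,\nu)$.
\begin{enumerate}[\rm(1)]\setcounter{enumi}{-1}
\item For $p>d$: the result in \cite{Hyt:Sp} identifying $\operatorname{Osc}^{p,p}(\nu)$ with $\dot B^p(\nu)$ for doubling measures. The dimension hypothesis \eqref{it:sep-dim} is a property of the metric alone, so it is available for $\nu$ as well.
\item If $\nu$ is Ahlfors $d$-regular, then $\nu(B(x,\rho(x,y)))\sim\rho(x,y)^d$, and $\dot B^p(\nu)=\dot B^{d/p}_{p,p}(\nu)$ follows immediately.
\item For $p\le d$: the lower dimension $d$ and the $(1,d)$-Poincar\'e inequality for $\nu$ give, via the corresponding result of \cite{Hyt:Sp}, that $\operatorname{Osc}^{p,p}(\nu)$ contains only constants. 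For $p<d$ we also use the monotonicity $S^p\subset S^d$.
\item For $(d,\infty)$: \cite{HK:W1p} gives $\operatorname{Osc}^{d,\infty}(\nu)\sim\dot M^{1,d}(\nu)$ for Ahlfors $d$-regular $\nu$ supporting a $(1,d_-)$-Poincar\'e inequality with $d_-<d$.
\end{enumerate}

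Third, for \eqref{it:A2} we use $L^2(w)=L^2(w\ud\mu)$. The operator $T$ acts on $L^2(w\mu)$ with kernel $K(x,y)w(y)^{-1}$ relative to $w\ud\mu$, which is not standard. So we instead conjugate: $[b,T]$ on $L^2(w)$ is unitarily equivalent to $w^{1/2}[b,T]w^{-1/2}$ on $L^2(\mu)$. The weighted statement of \cite{Hyt:Sp} (its $A_2$ extension of \eqref{eq:Spq-Opq}) then gives $\Norm{[b,T]}{S^{p,q}(L^2(w))}\sim\Norm{b}{\operatorname{Osc}^{p,q}(\mu)}$, after which the same chain applies.

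The main obstacle is the bookkeeping in the first step. We must confirm that the version of \eqref{eq:Spq-Opq} in \cite{Hyt:Sp}, including the $A_2$-weighted version and the $(d,\infty)$ case, is stated under only doubling of $\mu$ together with \eqref{it:sep-dim} and the kernel condition. We must also confirm that its lower-bound direction, which uses non-degeneracy, does not secretly use Ahlfors regularity of $\mu$. If some step there did use regularity of $\mu$, we would try to rerun that step with dyadic cubes adapted to the metric; these are the same for $\mu$ and $\nu$.
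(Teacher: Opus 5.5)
Your chain is the paper's own: \cite[Proposition 1.26]{Hyt:Sp} on $L^2(\mu)$ (and its $A_2$ version on $L^2(w)$), then Proposition \ref{prop:Osc(mu)=Osc(nu)}, then identification of $\operatorname{Osc}^{p,q}(\nu)$ using only properties of $\nu$. There is, however, a gap in your first step.

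You assert that the abstract equivalence $\Norm{[b,T]}{S^{p,q}(L^2(\mu))}\sim\Norm{b}{\operatorname{Osc}^{p,q}(\mu)}$ holds for $q=p\in(0,\infty)$ and for $(p,q)=(d,\infty)$. That is not what \cite{Hyt:Sp} provides. The equivalence is only available for $q\in[1,\infty]$ and $p\in(p(\eta),\infty)$, where
\[
p(\eta)=\max\bigl\{1,(\tfrac{\eta}{\Delta}+\tfrac12)^{-1}\bigr\},
\]
and $\Delta$ is the separation dimension, which equals $d$ by \eqref{it:sep-dim}.

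The missing step is the one place where the hypothesis $\eta>(1-\frac d2)_+$ is actually used. The inequality $\eta>1-\frac d2$ is equivalent to $(\frac\eta d+\frac12)^{-1}<d$. Hence $p(\eta)<d$ when $d>1$, and $p(\eta)=1=d$ when $d=1$ (where the hypothesis reads $\eta>\frac12$). This is what places each use inside the admissible range:
\begin{itemize}
\item $p>d$ in parts (0) and (1);
\item $p=q=d$ in part (2), after your reduction via $S^p\subset S^d$;
\item $(p,q)=(d,\infty)$ in part (3).
\end{itemize}
It also explains why parts (2) and (3) require $d>1$: for $d=1$ the endpoint $p=d=1$ is not covered. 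As written, your argument never uses the quantitative H\"older threshold in \eqref{it:CZ}, which is a symptom of the omission.

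Relatedly, the obstacle you single out is not the real one. That obstacle was possible hidden Ahlfors regularity of $\mu$ in the lower bound of \cite{Hyt:Sp}, but the equivalence there is for doubling $\mu$ with the separation condition. The genuine restriction is the range of $p$.

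Once the range check is inserted, the rest goes through as in the paper:
\begin{itemize}
\item \cite[Proposition 11.5]{Hyt:Sp} identifies $\operatorname{Osc}^{p,p}(\nu)$ with $\dot B^p(\nu)$.
\item The cut-off at $p=d$ follows from lower dimension $d$ and the $(1,d)$-Poincar\'e inequality of $\nu$; the paper applies \cite[Theorem 1.5(2)]{HK:W1p} to $\dot B^d(\nu)$.
\item For $(d,\infty)$, the paper passes through \cite[Proposition 1.29(2)]{Hyt:Sp}, using Ahlfors regularity of $\nu$, and then \cite[Theorem 1.1 and Remark 2.9]{HK:W1p}. Remark 2.9 is what justifies working with the $(1,d_-)$-Poincar\'e hypothesis without completeness.
\end{itemize}
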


When $\nu=\mu$, Theorem \ref{thm:Sp-nu} is contained in \cite[Theorem 1.1]{Hyt:Sp}. We make the following remarks about the usefulness of using two different measures:

\begin{remark}
\eqref{it:p>d} and \eqref{it:p>dA}: In general, the commutator characterisation involves the {\em ad hoc} Besov space $\dot B^p$, which only coincides with a classical-style $\dot B^{\frac dp}_{p,p}$ under Ahlfors regularity. By Theorem \ref{thm:Sp-nu}, it is not necessary for the original measure $\mu$ to be Ahlfors-regular, as long as it is $A_\infty$-equivalent to an Ahlfors-regular one.

\eqref{it:p<d}: This is the classical cut-off phenomenon first found by \cite{JW:82} in $\R^d$. It was subsequently extended to various Ahlfors regular cases \cite{FLL:23,LXY}. The first extension beyond Ahlfors regularity, highlighting the role of the lower dimension, is from \cite[Theorem 1.5 and Remark 1.6]{FLLX} in the Bessel setting, and this was placed into the general framework of lower dimension $d$ and $(1,d)$-Poincar\'e inequality in \cite[Theorem 1.1]{Hyt:Sp}. Now, we make a further extension, observing that it is not necessary for the original measure to have these properties, as long as an $A_\infty$-equivalent one does. Note that \eqref{eq:b=const} makes no reference to $\nu$; the new measure only appears in the conditions that guarantee \eqref{eq:b=const}. (Under the $A_\infty$-relation \eqref{eq:Ainfty}, it is easy to see that $\mu$ and $\nu$ are mutually absolutely continuous. Hence the notions of $\mu$-a.e.\ and $\nu$-a.e.\ coincide, and we can drop the reference to the measure, writing simply ``a.e.'' without ambiguity.)

\eqref{it:p=d}: This is the critical-index characterisation, first sketched by \cite{CST} in $\R^d$; see also \cite{Frank,FSZ:23,LMSZ} for elaborations. This, too, was extended to several concrete instances of Ahlfors regular spaces \cite{FLMSZ,LXY}, and eventually to an abstract class of such spaces with the additional assumption of the Poincar\'e inequality in \cite{Hyt:Sp}. In the concrete Bessel setting again, \cite{FLSZ} showed that a result like \eqref{it:p=d} can also hold in certain non-Ahlfors situations. Theorem \ref{thm:Sp-nu} reproves and explains this by showing that it is enough for an $A_\infty$-equivalent measure to satisfy the Ahlfors and Poincar\'e assumptions.

\eqref{it:A2}: While weighted norm inequalities at large have been extensively studied both in Euclidean spaces and beyond, Schatten class estimates on $L^2(w)$ are relatively few. For $(X,\rho)=(\R^d,\abs{x-y})$ with $\mu=\nu=\ud x$ and one of the classical Riesz transforms $R_j=\partial_j(-\Delta)^{-\frac12}$ in place of $T$, this weighted result is due to \cite{GLW:23}. For general spaces $(X,\rho)$ and operators $T$ as in Theorem \ref{thm:Sp-nu}, case $\mu=\nu$ was obtained in \cite{Hyt:Sp}. See also \cite{LLW:2wH,LLWW:2wR} for weighted results in another direction of generality, namely, dealing with Schatten bounds between $L^2(w_1)$ and $L^2(w_2)$ for two different weights $w_i\in A_2(\R^n)$, but with the underlying measure $\mu$ being simply the Lebesgue measure $\ud x$.
\end{remark}

We further illustrate Theorem \ref{thm:Sp-nu} in the concrete Bessel setting studied in \cite{FLLX,FLSZ}:

\begin{corollary}\label{cor:Bessel}
Let $n\geq 0$, let
\begin{equation*}
   T=R_{\lambda,j}=\partial_j(-\Delta_\lambda)^{-\frac12},\qquad
   \Delta_\lambda:=\sum_{j=1}^{n+1}\partial_j^2+\frac{2\lambda}{x_{n+1}}\partial_{n+1},
\end{equation*}
be one the Bessel--Riesz transforms on $\R_+^{n+1}:=\R^n\times(0,\infty)$, and $b\in L^1_{\loc}(\R^{n+1}_+)$.
\begin{enumerate}[\rm(1)]
  \item\label{it:p>n+1} If $p\in(n+1,\infty)$, then 
\begin{equation*}
  \Norm{[b,T]}{S^p(L^2(m_\lambda))}
  \sim \Norm{b}{\dot B^{\frac{n+1}{p}}_{p,p}(\R^{n+1}_+)} 
  :=\Big(\iint_{\R^{n+1}_+\times\R^{n+1}_+}\frac{\abs{b(x)-b(y)}^p}{\abs{x-y}^{2(n+1)}}\ud x\ud y\Big)^{\frac1p}.
\end{equation*}

  \item\label{it:p<n+1}  If $n\geq 1$ and $p\in(0,n+1]$, then
\begin{equation*}
     [b,T]\in S^p(L^2(m_\lambda))\quad\Leftrightarrow\quad
     b=\text{constant a.e.}
\end{equation*}

  \item\label{it:p=n+1} If $n\geq 1$, then 
\begin{equation}\label{eq:p=n+1}
  \Norm{[b,T]}{S^{n+1,\infty}(L^2(m_\lambda))}
  \sim \Norm{b}{\dot W^{1,n+1}(\R^{n+1}_+)} 
  :=\Big(\iint_{\R^{n+1}_+}\abs{\nabla b(x)}^{n+1}\ud x\Big)^{\frac{1}{n+1}}.
\end{equation}

  \item\label{it:Bessel-A2} If $w\in A_2(m_\lambda)$ (Definition \ref{def:A2}), then all claims above remain valid with $L^2(m_\lambda)$ on the left replaced by the weighted space $L^2(w)=L^2(w\ud m_\lambda)$.
\end{enumerate}
\end{corollary}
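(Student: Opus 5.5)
The plan is to derive Corollary \ref{cor:Bessel} from Theorem \ref{thm:Sp-nu}, applied on $X=\R^{n+1}_+$ with the Euclidean metric $\rho(x,y)=\abs{x-y}$, $\mu=m_\lambda$ and $\nu$ the Lebesgue measure restricted to $\R^{n+1}_+$. Set $d=n+1$. Most hypotheses are standard or quoted from the literature, so the work is to check them one by one.

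Hypothesis \eqref{it:sep-dim} holds in any subset of $\R^{d}$ by a volume-packing argument. For the Lebesgue measure on the half-space, $\nu(B(x,r)\cap\R^{d}_+)\sim r^d$, so $\nu$ is Ahlfors $d$-regular. Being a uniform (convex) domain, $\R^{d}_+$ also supports the $(1,1)$-Poincar\'e inequality, hence the $(1,d_-)$-Poincar\'e inequality with $d_-=1<d$ when $n\geq1$, and in particular the $(1,d)$-Poincar\'e inequality with lower dimension $d$. For \eqref{it:CZ}, I will quote from \cite{FLLX,FLSZ} the known facts that $R_{\lambda,j}$ is bounded on $L^2(m_\lambda)$ and has a standard Calder\'on--Zygmund kernel relative to $m_\lambda$ with H\"older exponent $\eta=1$ (smooth kernel), together with the non-degeneracy condition of Definition \ref{def:CZK}. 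Since $\eta=1>(1-\frac d2)_+$ for every $d\geq1$, the exponent condition is satisfied.

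The main step is hypothesis \eqref{it:mu-nu}: the $A_\infty$ relation \eqref{eq:Ainfty} between $\ud m_\lambda=x_{n+1}^{2\lambda}\ud x$ and $\ud x$ on balls of the half-space. I would view $w(x)=x_{n+1}^{2\lambda}$ as a power weight of the distance to the boundary. For $\lambda>0$ it lies in the Muckenhoupt class $A_\infty$ with respect to Lebesgue measure on half-space balls. To see this I would split into two cases:
\begin{enumerate}[(a)]
  \item if $B=B(x,r)$ with $r\le x_{n+1}/2$, then $w$ is comparable to a constant on $B$;
  \item otherwise $B$ is comparable to a ball centred on the boundary, and the one-variable computation for $t^{2\lambda}$ on $(0,r)$ gives a reverse H\"older inequality, or directly $\ave{w}_B\exp\ave{\log w^{-1}}_B\lesssim1$.
\end{enumerate}
The $A_\infty$ property of $w$ with respect to $\ud x$ gives $\abs{E}\le\eps\abs{B}\Rightarrow w(E)\le\frac12 w(B)$. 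Relation \eqref{eq:Ainfty} asks for the reverse direction, and since $A_\infty$ is symmetric for doubling measures this follows. Concretely, $w^{-1}\ud m_\lambda=\ud x$, and a reverse H\"older inequality for $w^{-1}$ with respect to $m_\lambda$ holds by the same computation when $2\lambda<1$; for general $\lambda$ I would instead invoke the standard symmetry of the $A_\infty$ relation for doubling measures. Doubling of both measures is immediate from $m_\lambda(B(x,r))\sim r^{d}(\max\{x_{n+1},r\})^{2\lambda}$.

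With all hypotheses verified, the four items follow directly. Item \eqref{it:p>n+1} is \eqref{it:p>dA}, since the Besov norm with $\nu$ Ahlfors is exactly the displayed one; it needs $d\ge1$, so $n\ge0$ is allowed. Items \eqref{it:p<n+1} and \eqref{it:Bessel-A2} are \eqref{it:p<d} and \eqref{it:A2}. For \eqref{it:p=n+1}, I apply \eqref{it:p=d} and then identify $\dot M^{1,d}(\R^d_+)$ with $\dot W^{1,d}(\R^d_+)$. This identification is standard for $p=d>1$ on extension domains: extend by reflection to $\R^d$, use the Haj\l{}asz characterisation there, and restrict back. I expect the $A_\infty$ verification, with its case analysis near the boundary, to be the only genuinely non-bookkeeping step.
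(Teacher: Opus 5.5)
Your proposal is correct and follows the paper's proof. Like the paper, you verify the hypotheses of Theorem \ref{thm:Sp-nu} with $d=n+1$, $\mu=m_\lambda$ and $\nu$ the Lebesgue measure on $\R^{n+1}_+$, quote the kernel bounds and non-degeneracy from the Bessel literature, and finish item (3) via $\dot M^{1,n+1}=\dot W^{1,n+1}$, where your reflection argument spells out what the paper attributes to Haj\l{}asz. The differences are minor: you check \eqref{eq:Ainfty} through a reverse H\"older inequality for $x_{n+1}^{2\lambda}$ plus the complement symmetry of the $A_\infty$ relation, which works for every $\lambda>0$, so your detour via $2\lambda<1$ is unnecessary, whereas the paper quotes $x_{n+1}^{2\lambda}\in A_p$ for $p>2\lambda+1$; and you omit the trivial inclusion $L^1_{\loc}(\R^{n+1}_+)\subset L^1_{\loc}(m_\lambda)$ needed to apply the theorem.
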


This is essentially the special case of Theorem \ref{thm:Sp-nu} corresponding to $\ud\mu=\ud m_\lambda=x_{n+1}^{2\lambda}\ud x$ (the Bessel measure) and $\ud\nu=\ud x$ (the Lebesgue measure on $\R_+^{n+1}$).

We note that \eqref{it:p=n+1} is a restatement of \cite[Theorem 1.2]{FLSZ}; however, we reprove it here in a completely different way. Likewise, \eqref{it:p<n+1} was proved in \cite[Theorem 1.5]{FLLX} under the {\em a priori} assumption that $b\in C^2(\R_+^{n+1})$ and in \cite[Example 1.23]{Hyt:Sp} as stated. Nevertheless, Theorem \ref{thm:Sp-nu} offers a slight variant of the argument compared to \cite{Hyt:Sp}: it is no longer necessary to know about a Poincar\'e inequality in $(\R_+^{n+1},\abs{x-y},m_\lambda)$ (although it was verified in \cite[Proposition 4.2]{Hyt:Sp}), but simply to use the standard Poincar\'e inequality in $(\R_+^{n+1},\abs{x-y},\ud x)$.

While \eqref{it:p<n+1} and \eqref{it:p=n+1} are simply restatements of known results, \eqref{it:p>n+1} is a new variant of \cite[Theorem 1.4]{FLLX}, where a similar result was obtained using an {\em ad hoc} Besov norm with respect to the measure $m_\lambda$:
\begin{equation*}
  \Norm{[b,T]}{S^p(L^2(m_\lambda))}
  \sim \Norm{b}{\dot B^p(m_\lambda)},
\end{equation*}
with $\dot B^p(m_\lambda)$ defined as in \eqref{eq:p>d} for $\nu=m_\lambda$.
Corollary \ref{cor:Bessel} shows that this exotic Besov norm reduces to a completely classical one. Together with the other cases of Corollary \ref{cor:Bessel}, this means that the Schatten properties of commutators of the Bessel--Riesz transforms are fully characterised in terms classical functions spaces, which is a new result even in this concrete setting.

The weighted extension stated in \eqref{it:Bessel-A2} is a further novelty also in case \eqref{it:p=n+1}, where the unweighted case is a restatement of \cite[Theorem 1.2]{FLSZ}. As the reader can observe in the proof below, this extension comes almost for free in our approach, while it seems non-obvious whether such an extension can be incorporated into the argument of \cite{FLSZ}.

The rest of the paper is organised as follows. In Section \ref{sec:basic}, we collect some general definitions that were already used in this Introduction. In Section \ref{sec:Ainfty}, we introduce the oscillatory norms appearing in Proposition \ref{prop:Osc(mu)=Osc(nu)} and, after some preparations, give the proof of the said proposition. Theorem \ref{thm:Sp-nu} and Corollary \ref{cor:Bessel} are then proved in the following Sections \ref{sec:Sp-nu} and \ref{sec:Bessel}, respectively. We conclude with some final comments comparing our approach and that of \cite{FLSZ} in Section \ref{sec:comments}.

\section{Basic definitions}\label{sec:basic}

For the convenience of reference, we collect here some basic definitions that were already used in the Introduction.
While these definitions are needed to understand the statements of some of our results, we note that most of them will be used only indirectly in this paper, in the sense that these notions appear as input or output of some results that we quote from elsewhere, but hardly in any hands-on computations.

\begin{definition}\label{def:Spq}
Let $H$ is a Hilbert space and $A,F\in\bddlin(H)$ be a bounded linear operators on $H$.
We write $\operatorname{rank}F\leq n$ if there are elements $e_k,f_k\in H$ such that
\begin{equation*}
  F(h)=\sum_{k=1}^n (h,e_k)f_k
\end{equation*}
for all $h\in H$. The $n$th approximation number of $A$ is defined by
\begin{equation*}
  a_n(A):=\inf\{\Norm{A-F}{}:F\in\bddlin(H),\operatorname{rank}F\leq n\},
\end{equation*}
where $n\in\N:=\{0,1,2,\ldots\}$. For $p\in(0,\infty)$ and $q\in(0,\infty]$, the Schatten--Lorentz $S^{p,q}(H)$ (quasi-)norm and space are defined by
\begin{equation*}
\begin{split}
  \Norm{A}{S^{p,q}(H)} &:=\Norm{(a_n(A))_{n=0}^\infty}{\ell^{p,q}}, \\
   S^{p,q}(H) &:=\{A\in\bddlin(H):\Norm{A}{S^{p,q}(H)}<\infty\}.
\end{split}
\end{equation*}
We abbreviate $S^p(H):=S^{p,p}(H)$.
\end{definition}

\begin{definition}\label{def:metric}
A metric measure space is a triplet $(X,\rho,\mu)$, where $X$ is a set, $\rho$ is a metric on $X$, and $\mu$ is a Borel measure on $X$. The space is called doubling if
\begin{equation*}
  \mu(B(x,2R))\lesssim\mu(B(x,R))
\end{equation*}
uniformly in $x\in X$ and $R>0$.
\end{definition}

\begin{definition}\label{def:CZK}
A function $K:\{(x,y)\in X\times X: x\neq y\}\to\C$ is called a standard kernel of H\"older exponent $\eta\in(0,1]$ on $(X,\rho,\mu)$ if
\begin{equation*}
  \abs{K(x,y)}\lesssim\frac{1}{\mu(B(x,\rho(x,y))}
\end{equation*}
and, whenever $\rho(x,x')\ll\rho(x,y)$,
\begin{equation*}
  \abs{K(x,y)-K(x',y)}
  +\abs{K(y,x)-K(y,x')}
  \lesssim\frac{1}{\mu(B(x,\rho(x,y))}\Big(\frac{\rho(x,x')}{\rho(x,y)}\Big)^\eta.
\end{equation*}
We say that such kernel is non-degenerate if for every $x\in X$ and $r(0,\infty)$, there is a point $y\in X$ with $\rho(x,y)\sim r$ such that
\begin{equation*}
  \abs{K(x,y)}+\abs{K(y,x)}\gtrsim\frac{1}{\mu(B(x,\rho(x,y))}.
\end{equation*}
\end{definition}

\begin{definition}\label{def:Ahlfors}
For $0\leq d\leq D<\infty$, we say that a metric measure space $(X,\rho,\mu)$ has lower dimension $d$ and upper dimension $D$ if
\begin{equation*}
  \Big(\frac{R}{r}\Big)^d\lesssim\frac{\mu(B(x,R))}{\mu(B(x,r))}\lesssim\Big(\frac{R}{r}\Big)^D
\end{equation*}
uniformly for all $x\in X$ and all $0<r\leq R\lesssim\operatorname{diam}(X)$.
The space is called Ahlfors $d$-regular, if it has both upper and lower dimension $d$.
\end{definition}

\begin{definition}\label{def:Poincare}
We say that a space $(X,\rho,\mu)$ satisfies the $(1,p)$-Poincar\'e inequality if there are constant $\lambda,c_P>0$ such that, for every Lipschitz function $f:X\to\R$ and every ball $B(x,R)$ in $X$,
\begin{equation*}
  \fint_{B(x,R)}\abs{f-\ave{f}_{B(x,R)}}\ud\mu
  \leq c_P\Big(\fint_{B(x,\lambda R)}\operatorname{lip}f(x)^p \ud\mu\Big)^{\frac1p},
\end{equation*}
where
\begin{equation*}
  \operatorname{lip}f(x):=\liminf_{t\to 0}\sup_{\rho(x,y)\leq t}\frac{\abs{f(x)-f(y)}}{t}.
\end{equation*}
\end{definition}

\begin{definition}\label{def:Hajlasz}
For a measurable function $f$ on $(X,\rho,\mu)$, we say that a measurable $h:X\to[0,\infty]$ is a Haj\l{}asz upper gradient of $f$ if
\begin{equation*}
  \abs{f(x)-f(y)}\leq\rho(x,y)(h(x)+h(y))\qquad\text{for $\mu$-a.e. }x,y\in X.
\end{equation*}
For $p\in(0,\infty]$, the Haj\l{}asz--Sobolev norm of $f$ is
\begin{equation*}
  \Norm{f}{\dot M^{1,p}(\mu)}
  :=\inf\Big\{\Norm{h}{L^p(\mu)}: h\text{ is a Haj\l{}asz upper gradient of }f\Big\}.
\end{equation*}
\end{definition}

\begin{definition}\label{def:A2}
On a doubling metric measure space $(X,\rho,\mu)$, a measurable function $w:X\to[0,\infty]$ is called an $A_2(\mu)$ weights, denoted by $w\in A_2(\mu)$ if
\begin{equation*}
  [w]_{A_2(\mu)}:=\sup_B\Big(\fint_B w\ud\mu\Big)\Big(\fint_B\frac{1}{w}\ud\mu\Big)
\end{equation*}
is finite, where the supremum is taken over all metric balls $B$ in $X$.
\end{definition}

\section{Proof of Proposition \ref{prop:Osc(mu)=Osc(nu)} on the $A_\infty$ invariance}\label{sec:Ainfty}

We begin by introducing the oscillatory spaces $\operatorname{Osc}^{p,q}(\mu)$ featuring in Proposition \ref{prop:Osc(mu)=Osc(nu)} and some variants needed in its proof.

For $r\in(0,\infty)$, a set $E$ of finite positive $\mu$-measure and a measurable function $f$, let
\begin{equation*}
  \operatorname{osc}_{r,\mu}(f;E):=\inf_c\Big(\fint_E\abs{f-c}^r\ud\mu\Big)^{\frac 1r}.
\end{equation*}
Note that this is well defined (but possibly $\infty$) for all measurable $f$. It is finite if and only if $1_E f\in L^r(\mu)$. If $r\in[1,\infty)$, and $1_E f\in L^r(\mu)$, then
\begin{equation}\label{eq:Osc-ave}
  \operatorname{osc}_{r,\mu}(f;E)\sim\Big(\fint_E\abs{f-\ave{f}_E^\mu}^r\ud\mu\Big)^{\frac 1r},\quad
  \ave{f}_E^\mu:=\fint_E f\ud\mu.
\end{equation}

We fix a system $\mathscr D$ of dyadic cubes on $(X,\rho)$ in the sense of \cite{HK:12}. For each dyadic cube $Q\in\mathscr D$ with ``centre'' $z_Q$ and ``side-length'' $\ell(Q)$, let $B_Q:=B(z_Q,c\ell(Q))$ be a ``concentric'' ball with a fixed large expansion factor $c$. Then we define:

\begin{definition}\label{def:Osc-pq}
With the notion as above, for all measurable functions $f$ and exponents $p,r\in(0,\infty)$ and $q\in(0,\infty]$, we define
\begin{equation*}
  \Norm{f}{\operatorname{Osc}^{p,q}_r(\mu)}
  :=\BNorm{\Big\{\operatorname{osc}_{r,\mu}(f;B_Q)\Big\}_{Q\in\mathscr D}}{\ell^{p,q}}
\end{equation*}
and
\begin{equation*}
  \Norm{f}{\operatorname{Osc}^{p,q}(\mu)}:=\Norm{f}{\operatorname{Osc}^{p,q}_1(\mu)}.
\end{equation*}
\end{definition}

Using \eqref{eq:Osc-ave}, one sees the equivalence of this definition with a slightly different version appearing in \cite{Hyt:Sp}.

We also consider a finite collection of adjacent dyadic systems $\mathscr D^m$, $m=1,\ldots,M$, with the following additional property; these exist by \cite[Theorem 4.1]{HK:12}:
\begin{equation}\label{eq:adjacent}
  \forall\ B=B(x,t)\ \exists m\in\{1,\ldots,M\},\ \exists\ Q\in\mathscr D^m:\quad
  B\subset Q,\quad\operatorname{diam}(Q)\lesssim r.
\end{equation}
For each of these $\mathscr D^m$, we consider the dyadic oscillation norm
\begin{equation*}
  \Norm{f}{\operatorname{Osc}^{p,q}_r(\mathscr D^m,\mu)}
  :=\BNorm{\Big\{\operatorname{osc}_{r,\mu}(f;Q)\Big\}_{Q\in\mathscr D^m}}{\ell^{p,q}}.
\end{equation*}

\begin{lemma}\label{lem:Osc=Osc(D)}
Let $(X,\rho,\mu)$ be a doubling metric measure space and $p,r\in(0,\infty)$ and $q\in(0,\infty]$.
For all measurable functions $f$ we have the following equivalence with implicit constants independent of $f$:
\begin{equation*}
  \Norm{f}{\operatorname{Osc}^{p,q}_r(\mu)}
  \sim\sum_{m=1}^M \Norm{f}{\operatorname{Osc}^{p,q}_r(\mathscr D^m,\mu)}
\end{equation*}
\end{lemma}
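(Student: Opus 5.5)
We prove the two inequalities by comparing each oscillation in one family with one oscillation in the other family, and then checking that the comparison map has bounded multiplicity. The tool is a simple monotonicity property: if $E\subset F$ and $\mu(F)\lesssim\mu(E)$, then
\begin{equation*}
  \operatorname{osc}_{r,\mu}(f;E)\le\Big(\frac{\mu(F)}{\mu(E)}\Big)^{\frac1r}\operatorname{osc}_{r,\mu}(f;F)\lesssim\operatorname{osc}_{r,\mu}(f;F).
\end{equation*}
This holds because, for any constant $c$, $\fint_E\abs{f-c}^r\ud\mu\le\frac{\mu(F)}{\mu(E)}\fint_F\abs{f-c}^r\ud\mu$, and we then take the infimum over $c$.

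A standard observation about Lorentz sequence norms handles the bookkeeping. Suppose $\phi:\mathscr A\to\mathscr B$ is a map between countable index sets with $\sup_{\beta}\#\phi^{-1}(\beta)\le N$, and suppose $a_\alpha\lesssim b_{\phi(\alpha)}$. Then $\Norm{(a_\alpha)}{\ell^{p,q}}\lesssim_N\Norm{(b_\beta)}{\ell^{p,q}}$. To see this, note that the decreasing rearrangement of $(b_{\phi(\alpha)})_\alpha$ is dominated by that of $(b_\beta)$ dilated by the factor $N$, and dilation is bounded on $\ell^{p,q}$. Alternatively, write the sequence indexed by $\mathscr A$ as the union of $N$ subsequences on which $\phi$ is injective, and use the quasi-triangle inequality.

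\emph{Direction $\lesssim$.} Take $Q\in\mathscr D$ with ball $B_Q=B(z_Q,c\ell(Q))$. By \eqref{eq:adjacent} there exist $m$ and $P=\phi(Q)\in\mathscr D^m$ with $B_Q\subset P$ and $\operatorname{diam}(P)\lesssim\ell(Q)$. We now check the measure comparison: $P$ lies in a ball $B(z_Q,C\ell(Q))$, so doubling gives $\mu(P)\lesssim\mu(B_Q)$. The monotonicity property then yields $\operatorname{osc}_{r,\mu}(f;B_Q)\lesssim\operatorname{osc}_{r,\mu}(f;P)$. It remains to bound the multiplicity. If $\phi(Q)=P$, then $\ell(Q)\sim\ell(P)$ from both sides: the upper bound $\ell(Q)\lesssim\operatorname{diam}(P)\lesssim\ell(Q)$ uses that $B_Q\subset P$ and $B_Q$ contains a ball of radius comparable to $\ell(Q)$, together with the fact that dyadic cubes are not tiny compared to their side-length, which needs some care. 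Also $z_Q\in P$. In a doubling metric space, the number of cubes $Q\in\mathscr D$ of comparable side-length with centres in a set of diameter $\lesssim\ell(P)$ is bounded, because the centres are $\ell$-separated. Hence the multiplicity is at most a constant $N$. We split according to $m$ and apply the bookkeeping observation to each of the $M$ pieces; the quasi-triangle inequality in $\ell^{p,q}$ then gives the bound by $\sum_m$.

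\emph{Direction $\gtrsim$.} Fix $m$ and $P\in\mathscr D^m$. The cube $P$ contains a ball $B(z_P,a\ell(P))$ and is contained in a ball $B(z_P,A\ell(P))$. Choose $Q\in\mathscr D$ to be the cube of generation $\ell(Q)\sim\ell(P)$ that contains $z_P$. Then $B_Q=B(z_Q,c\ell(Q))\supset B(z_P,A\ell(P))\supset P$, provided the expansion factor $c$ is large, which the definition allows. Doubling gives $\mu(B_Q)\lesssim\mu(P)$, so $\operatorname{osc}_{r,\mu}(f;P)\lesssim\operatorname{osc}_{r,\mu}(f;B_Q)$. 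Within a fixed $\mathscr D^m$, the cubes mapped to the same $Q$ have comparable side-length and centres within distance $\lesssim\ell(Q)$ of $z_Q$, so there are boundedly many of them. Summing over $m$ finishes this direction.

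The main obstacle is purely geometric bookkeeping: keeping the constant $c$ large enough relative to the constants of \cite{HK:12}, and justifying that side-lengths are comparable in the direction $\lesssim$. We resolve this with the inner and outer ball property of the dyadic cubes from \cite{HK:12}, which gives $B(z_P,a\ell(P))\subset P\subset B(z_P,A\ell(P))$. Together with $B_Q\subset P$, this yields $c\ell(Q)\le 2A\ell(P)$. Conversely, the diameter bound from \eqref{eq:adjacent} ensures that we may pick $P$ with $\ell(P)\lesssim\ell(Q)$. No property of $f$ is used anywhere, so the implicit constants are independent of $f$. The values $\infty$ are allowed throughout, since every step is a pointwise comparison of nonnegative extended reals.
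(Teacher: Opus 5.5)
Your proposal is correct and follows the paper's proof essentially step by step. It uses the same comparison $\operatorname{osc}_{r,\mu}(f;B_Q)\lesssim\operatorname{osc}_{r,\mu}(f;P)$ via \eqref{eq:adjacent} and doubling, the same reverse choice of a cube of $\mathscr D$ of comparable generation containing $z_P$, and bounded multiplicity of both correspondences. The only difference there is that you count preimages via separated centres, where the paper uses disjointness within a generation plus $\mu(Q)\gtrsim\mu(R)$.

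One small caveat: the inference $B_Q\subset P\subset B(z_P,A\ell(P))\Rightarrow c\ell(Q)\le 2A\ell(P)$ is not valid in a general metric space. Balls of very different radii can coincide, e.g.\ when $X$ is bounded or has isolated points. It is therefore cleaner to take $\ell(P)\sim\ell(Q)$ directly from the construction in \cite{HK:12}, which the paper's proof also tacitly relies on.
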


\begin{proof}
$\lesssim$: For every $Q\in \mathscr D$, we apply \eqref{eq:adjacent} to $B_Q$ to find a $Q^m\in\mathscr D^m$ with $B_Q\subset Q^m$ and $\operatorname{diam}(Q^m)\lesssim\ell(Q)$; hence $\mu(Q^m)\lesssim\mu(Q)\sim\mu(B_Q)$.
It follows that $\operatorname{osc}_{r,\mu}(f,B_Q)\lesssim\operatorname{osc}_{r,\mu}(f,Q^m)$. For $t'\neq t$, let $Q^{t'}:=\varnothing$ and $\operatorname{osc}_{r,\mu}(f,\varnothing):=0$.

Under the correspondence $Q\mapsto Q^m=R$, we claim that each $R\in\mathscr D^m$ may have at most boundedly many preimages. Indeed, these $Q$ satisfy $\ell(Q)\sim\ell(R)$, so they belong to boundedly many generations only. Within each fixed generation, these $Q$ are pairwise disjoint, contained in $R$, and their measure $\mu(Q)\gtrsim\mu(R)$ is at least a fixed fraction of $\mu(R)$. Hence, there can be only boundedly many such $Q$ in each generation, and then only boundedly many in the boundedly many generations altogether. It thus follows that
\begin{equation*}
\begin{split}
  \Norm{f}{\operatorname{Osc}^{p,q}_r(\mu)}
  &=\BNorm{\Big\{\operatorname{osc}_{r,\mu}(f;B_Q)\Big\}_{Q\in\mathscr D}}{\ell^{p,q}}
  \lesssim\sum_{m=1}^M \BNorm{\Big\{\operatorname{osc}_{r,\mu}(f;Q^m)\Big\}_{Q\in\mathscr D}}{\ell^{p,q}} \\
  &\lesssim\sum_{m=1}^M \BNorm{\Big\{\operatorname{osc}_{r,\mu}(f;R)\Big\}_{R\in\mathscr D^m}}{\ell^{p,q}}
  =\sum_{m=1}^M \Norm{f}{\operatorname{Osc}^{p,q}_r(\mathscr D^m,\mu)}.
\end{split}
\end{equation*}

$\gtrsim$: It suffices to consider a fixed $m$. For each $R\in\mathscr D^m$, there is $\hat R\in\mathscr D$ of the same side-length such that $z_R\in Q$. Then $R\subset B_{\hat R}$ when expansion factor defining the latter ball is fixed large enough, but also $\mu(B_{\hat R})\lesssim\mu(R)$. Hence $\operatorname{osc}_{r,\mu}(f;R)\lesssim\operatorname{osc}_{r,\mu}(f,B_{\hat R})$.

Reasoning as before, under the correspondence $R\mapsto \hat R=Q$, each $Q\in\mathscr D^m$ may have at most boundedly many preimages. Hence
\begin{equation*}
\begin{split}
  \Norm{f}{\operatorname{Osc}^{p,q}_r(\mathscr D^m,\mu)}
  &=\BNorm{\Big\{\operatorname{osc}_{r,\mu}(f;R)\Big\}_{R\in\mathscr D^m}}{\ell^{p,q}}
  \lesssim \BNorm{\Big\{\operatorname{osc}_{r,\mu}(f;B_{\hat R})\Big\}_{R\in\mathscr D^m}}{\ell^{p,q}} \\
  &\lesssim\BNorm{\Big\{\operatorname{osc}_{r,\mu}(f;B_Q)\Big\}_{Q\in\mathscr D}}{\ell^{p,q}}
  =\Norm{f}{\operatorname{Osc}^{p,q}_r(\mu)}.
\end{split}
\end{equation*}
This completes the proof.
\end{proof}

\begin{lemma}\label{lem:Osc(r)=Osc(s)}
Let $(X,\rho,\mu)$ be a doubling metric measure space and $p,r,s\in(0,\infty)$ and $q\in(0,\infty]$.
For all measurable functions $f$ we have the following equivalence with implicit constants independent of $f$:
\begin{equation*}
  \Norm{f}{\operatorname{Osc}^{p,q}_r(\mathscr D^m,\mu)}
  \sim\Norm{f}{\operatorname{Osc}^{p,q}_s(\mathscr D^m,\mu)}
\end{equation*}
and
\begin{equation*}
  \Norm{f}{\operatorname{Osc}^{p,q}_r(\mu)}
  \sim\Norm{f}{\operatorname{Osc}^{p,q}_s(\mu)}.
\end{equation*}
\end{lemma}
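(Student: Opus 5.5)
Assume without loss of generality that $r<s$. The bound $\operatorname{osc}_{r,\mu}(f;Q)\le\operatorname{osc}_{s,\mu}(f;Q)$ follows from H\"older's inequality applied for each fixed $c$. Taking $\ell^{p,q}$ norms gives $\lesssim$ in one direction for both the dyadic norms and the ball norms. The whole content is the reverse bound $\Norm{f}{\operatorname{Osc}^{p,q}_s(\mathscr D^m,\mu)}\lesssim\Norm{f}{\operatorname{Osc}^{p,q}_r(\mathscr D^m,\mu)}$. Once the dyadic statement is known for every $m$, the ball version follows immediately from Lemma \ref{lem:Osc=Osc(D)}, applied with exponent $r$ and with exponent $s$:
\[
\Norm{f}{\operatorname{Osc}^{p,q}_s(\mu)}\sim\sum_m\Norm{f}{\operatorname{Osc}^{p,q}_s(\mathscr D^m,\mu)}\sim\sum_m\Norm{f}{\operatorname{Osc}^{p,q}_r(\mathscr D^m,\mu)}\sim\Norm{f}{\operatorname{Osc}^{p,q}_r(\mu)}.
\]

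For the dyadic reverse bound, fix $Q\in\mathscr D^m$ and let $c_R$ be near-optimal constants for $\operatorname{osc}_{r,\mu}(f;R)$ for each dyadic $R\subset Q$. I use a telescoping/martingale argument. For $\mu$-a.e.\ $x\in Q$, assume the Lebesgue differentiation property in the form $c_{R}\to f(x)$ as $R\ni x$ shrinks; this holds with the $L^r$ version of the differentiation theorem in doubling spaces, provided $1_Qf\in L^r$. Then
\[
|f(x)-c_Q|\le\sum_{R\subset Q,\,R\ni x}|c_R-c_{\hat R}|,
\]
where $\hat R$ is the parent of $R$. By doubling, $\mu(R)\sim\mu(\hat R)$, and so
\[
|c_R-c_{\hat R}|\lesssim\operatorname{osc}_{r,\mu}(f;R)+\operatorname{osc}_{r,\mu}(f;\hat R)\lesssim\operatorname{osc}_{r,\mu}(f;\hat R).
\]
This yields the pointwise bound $|f-c_Q|1_Q\lesssim\sum_{R\subseteq Q}\operatorname{osc}_{r,\mu}(f;R)1_R$. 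Writing $a_R:=\operatorname{osc}_{r,\mu}(f;R)$, this gives
\[
\operatorname{osc}_{s,\mu}(f;Q)\lesssim\Big(\fint_Q\Big(\sum_{R\subseteq Q}a_R1_R\Big)^s\ud\mu\Big)^{1/s}.
\]
If $1_Qf\notin L^r$, then $a_Q=\infty$ and there is nothing to prove.

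The main obstacle is to convert this into an $\ell^{p,q}$ bound. It is a "dyadic summation" estimate: we want the map $(a_R)\mapsto(A_Q)$, where $A_Q$ is the $L^s$-average over $Q$ of $\sum_{R\subseteq Q}a_R1_R$, to be bounded on $\ell^{p,q}(\mathscr D^m)$ for all $p,q$. A raw sum over all generations in the $L^s$ average does not decay. I would therefore first raise the exponent: use $\mu(R)/\mu(Q)$ weights. With $t=\min(1,s)$,
\[
\fint_Q\Big(\sum_R a_R1_R\Big)^s\ud\mu\le\fint_Q\Big(\sum_R a_R^t1_R\Big)^{s/t}\ud\mu.
\]
The difficulty is that without decay in $\mu(R)/\mu(Q)$ the operator is not bounded for $s>p$. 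So I would instead use a John--Nirenberg/sparse-type self-improvement: bound $\operatorname{osc}_s$ by a maximal-type quantity. One route is a good-$\lambda$ (stopping time) argument at level $C\,a^*_Q$, with $a^*_Q:=\sup_{R\subseteq Q}a_R$. This gives $\operatorname{osc}_{s,\mu}(f;Q)\lesssim\sup_{R\subseteq Q}(\mu(R)/\mu(Q))^{\gamma}a_R$ for some $\gamma>0$, via a localized John--Nirenberg with geometric decay across stopping generations. Given such a bound with decay $(\mu(R)/\mu(Q))^{\gamma}$, I would exploit the fact that, in a doubling dyadic system, the number of $R\subseteq Q$ with $\mu(R)\sim2^{-k}\mu(Q)$ is not controlled; but for each $R$, the ancestors $Q\supseteq R$ are a chain. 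Hence $\sup_Q$-type maps $a\mapsto A$ with $A_Q\le\sum_{R\subseteq Q}(\mu(R)/\mu(Q))^\gamma a_R$ can be bounded on $\ell^{p,q}$ via $p$-convexification and the chain structure. I expect this decay/counting step to be the crux, and I would try first to get it from a stopping-time version of John--Nirenberg.
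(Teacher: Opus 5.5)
\textbf{There is a genuine gap: the reverse dyadic inequality is not proved, and the intermediate estimate you propose for it is false.}

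Several parts of your proposal are fine. The easy direction is correct, and so is the pointwise telescoping bound $\abs{f-c_Q}1_Q\lesssim\sum_{R\subseteq Q}a_R1_R$. The reduction of the ball statement to the dyadic one via Lemma \ref{lem:Osc=Osc(D)} is exactly the paper's argument; for the dyadic equivalence itself the paper simply quotes \cite[Proposition 11.3]{Hyt:Sp}.

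But the reverse dyadic bound is the entire content of the lemma. No inequality $\operatorname{osc}_{s,\mu}(f;Q)\lesssim\sup_{R\subseteq Q}(\mu(R)/\mu(Q))^{\gamma}a_R$ can hold, for any $\gamma>0$. Take $[0,1)$ with Lebesgue measure and set $\eps=2^{-n-L}$. Let $S$ be the union of one dyadic interval of length $\eps$ inside each dyadic interval of length $2^{-n}$, and let $f=\eps^{-1/s}1_S$. Then $\operatorname{osc}_{s,\mu}(f;[0,1))\gtrsim\eps^{-1/s}\abs{S}^{1/s}=2^{n/s}$. On the other hand:
\begin{itemize}
\item $a_R\le\eps^{-1/s}2^{-L/r}$ if $\abs{R}\ge 2^{-n}$;
\item $a_R\le\eps^{-1/s}(\eps/\abs{R})^{1/r}$ if $\eps\le\abs{R}<2^{-n}$;
\item $a_R=0$ if $\abs{R}<\eps$.
\end{itemize}
Hence $\sup_R\abs{R}^{\gamma}a_R\lesssim\eps^{-1/s}\max(2^{-L/r},\eps^{\gamma})$. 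The ratio is then $\gtrsim\min(2^{L(1/r-1/s)},2^{(n+L)\gamma-L/s})$, which tends to $\infty$ for $n=L^2$ and $L\to\infty$. Many small, well-separated oscillations add up in $L^s$, and a supremum over subcubes cannot see this. A correct stopping-time or sparse argument yields an $\ell^s$-sum, such as $\operatorname{osc}_{s,\mu}(f;Q)^s\lesssim\sum_{R\subseteq Q}\frac{\mu(R)}{\mu(Q)}a_R^s$, not a supremum.

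\textbf{The linear-sum version also fails to give the summation step.} At the end you switch to $A_Q\lesssim\sum_{R\subseteq Q}(\mu(R)/\mu(Q))^{\gamma}a_R$. This can only hold with $\gamma\le 1/s$; test a single spike $\eps^{-1/s}1_{R_0}$ with $\abs{R_0}=\eps$. For such $\gamma$, the majorant $a\mapsto(\sum_{R\subseteq Q}(\mu(R)/\mu(Q))^{\gamma}a_R)_Q$ is unbounded on $\ell^p$ whenever $1/p+1/s\le1$. To see this, test $a_R=\abs{R}^{1/p}$ for $2^{-K}\le\abs{R}\le1$ in $[0,1)$: the output has $\ell^p$ norm $\gtrsim K^{1+1/p}$, against $\Norm{a}{\ell^p}\sim K^{1/p}$. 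So the ``counting step'' you defer cannot work in that form, and the Lorentz case $q\neq p$ is never addressed.

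\textbf{What would close the gap.} You need to keep the $\ell^s$ structure. Your own telescoping bound already suffices if you split by generation. Let $u=\min(1,s)$ and $A_Q^{(m)}:=(\sum_{R\in\operatorname{ch}^m(Q)}\frac{\mu(R)}{\mu(Q)}a_R^s)^{1/s}$, where the sum runs over the $m$-th generation descendants of $Q$. Then $\operatorname{osc}_{s,\mu}(f;Q)^u\lesssim\sum_{m\ge0}(A_Q^{(m)})^u$. Jensen's inequality (if $p\ge s$) or subadditivity of $t\mapsto t^{p/s}$ (if $p<s$) gives
\[
\Norm{A^{(m)}}{\ell^p}\le\sup_R\big(\mu(R)/\mu(R^{(m)})\big)^{\min(1/p,1/s)}\Norm{a}{\ell^p},
\]
where $R^{(m)}$ is the $m$-th ancestor of $R$. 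This is summable in $m$ whenever $\mu(R)/\mu(R^{(m)})$ decays geometrically. Real interpolation of the resulting quasi-sublinear majorant then handles $\ell^{p,q}$. None of this is carried out in your proposal.
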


\begin{proof}
The first claim is \cite[Proposition 11.3]{Hyt:Sp}. The second one follows by combining the first claim with Lemma \ref{lem:Osc=Osc(D)}.
\end{proof}

In the Euclidean space, there are many equivalent versions of the $A_\infty$ condition \eqref{eq:Ainfty}. Not all of these equivalences remain valid in greater generality (see \cite{KK:11}), but the following result is enough for our purposes:

\begin{proposition}[\cite{KK:11}, Corollary 4.19]\label{prop:KK}
Let $\mu,\nu$ be doubling measures on a metric space $(X,\rho)$. Then $\mu,\nu$ satisfy the $A_\infty$ relation if and only if $\ud\nu=w\ud\mu$, where the density $w$ satisfies the following reverse H\"older inequality for some $t>1$ and all metric balls $B$:
\begin{equation*}
  \Big(\fint_B w^t\ud\mu\Big)^{\frac1t}\lesssim\fint_B w\ud\mu
  =\frac{\nu(B)}{\mu(B)}.
\end{equation*}
\end{proposition}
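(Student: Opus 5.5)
The plan is to prove the two implications separately. The reverse Hölder $\Rightarrow$ $A_\infty$ direction is a one-line Hölder estimate. The converse is a Gehring-type self-improvement, run through a Calderón--Zygmund decomposition in the dyadic cubes of \cite{HK:12}.

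\emph{Reverse Hölder $\Rightarrow$ \eqref{eq:Ainfty}.} Let $E\subset B$ and $\ud\nu=w\ud\mu$. By Hölder's inequality and the reverse Hölder inequality,
\begin{equation*}
  \nu(E)=\int_E w\ud\mu\leq\mu(E)^{1/t'}\Big(\int_B w^t\ud\mu\Big)^{1/t}
  \lesssim\Big(\frac{\mu(E)}{\mu(B)}\Big)^{1/t'}\nu(B).
\end{equation*}
Choosing $\eps$ so small that $C\eps^{1/t'}\leq 1-\delta$, with say $\delta=\tfrac12$, gives \eqref{eq:Ainfty}.

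\emph{\eqref{eq:Ainfty} $\Rightarrow$ absolute continuity.} First I show $\nu\ll\mu$. Suppose $\mu(E)=0$ with $\nu(E)>0$, and we may take $E$ bounded. By the Lebesgue differentiation theorem for the doubling measure $\nu$, there is a point $x\in E$ and a small ball $B\ni x$ with $\nu(E\cap B)>(1-\delta)\nu(B)$. On the other hand $\mu(E\cap B)=0\leq\eps\mu(B)$, so \eqref{eq:Ainfty} gives $\nu(E\cap B)\leq(1-\delta)\nu(B)$, a contradiction. Hence $w:=\ud\nu/\ud\mu$ exists.

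\emph{Key density estimate.} The condition is stated for balls. Via \eqref{eq:adjacent} and doubling, it transfers to dyadic cubes $Q$ in each adjacent system $\mathscr D^m$, because every cube sits between two balls of comparable $\mu$- and $\nu$-measure; the price is only a change of $\eps,\delta$. Now fix a cube $Q$ with $\ave{w}^\mu_Q\leq C_0\lambda$ and a small parameter $\beta$. Set $E:=\{w\leq\beta\lambda\}\cap Q$. Then
\begin{equation*}
  \nu(E)\leq\beta\lambda\mu(Q)\leq C\beta\,\nu(Q)\quad\text{whenever }\ave{w}_Q\geq\lambda.
\end{equation*}
So $F:=Q\setminus E$ satisfies $\nu(F)>(1-\delta)\nu(Q)$ once $\beta$ is small. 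The contrapositive of \eqref{eq:Ainfty} then yields
\begin{equation*}
  \mu(\{w>\beta\lambda\}\cap Q)>\eps\mu(Q)\gtrsim\nu(Q)/\lambda.
\end{equation*}

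\emph{Good-$\lambda$ and Gehring.} Fix a ball $B$, contained in a cube $Q_0$ of comparable size by \eqref{eq:adjacent}. For $\lambda\gtrsim\ave{w}_{Q_0}$, perform the Calderón--Zygmund stopping on maximal dyadic subcubes $Q_j$ of $Q_0$ with $\ave{w}_{Q_j}>\lambda$. The dyadic doubling of $\mu$ gives $\ave{w}_{Q_j}\lesssim\lambda$, and $w\leq\lambda$ a.e.\ off $\bigcup_j Q_j$. Summing the key density estimate over $j$ yields
\begin{equation*}
  \nu(\{w>\lambda\}\cap Q_0)\leq\sum_j\nu(Q_j)\lesssim\lambda\,\mu(\{w>\beta\lambda\}\cap Q_0).
\end{equation*}
Multiply by $\lambda^{s-1}$ and integrate over $\lambda>\ave{w}_{Q_0}$. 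For $s>0$ small, the term $\int_{Q_0}w^{1+s}\ud\mu$ appearing on the right carries a factor $\lesssim s\beta^{-1-s}$, which is less than $\tfrac12$. It can therefore be absorbed on the left, after a standard truncation $\min(w,N)$ to make everything finite. The result is $\fint_{Q_0}w^{1+s}\ud\mu\lesssim(\ave{w}_{Q_0})^{1+s}$, and doubling returns this to the ball $B$ with $t=1+s$.

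\emph{Main obstacle.} I expect the main obstacle to be the metric bookkeeping. The stopping argument lives on cubes of a single dyadic system, while both the hypothesis and the conclusion concern balls. One must check that the stopping cubes' parents are still controlled, which is where the doubling of $\mu$ enters. One must also check that passing between balls and cubes through the finitely many systems $\mathscr D^m$ only changes constants.
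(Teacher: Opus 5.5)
Your argument is correct. The paper does not prove this statement at all: it quotes it from \cite[Corollary 4.19]{KK:11}, with the warning that not every Euclidean characterisation of $A_\infty$ survives in metric spaces. So your proposal is an independent, self-contained route. The easy direction is H\"older. For the hard direction you run the classical Coifman--Fefferman/Gehring good-$\lambda$ argument on the dyadic cubes of \cite{HK:12}, using only the adjacent systems \eqref{eq:adjacent} that the paper already uses in Lemma \ref{lem:Osc=Osc(D)}.

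The citation buys brevity and access to the systematic comparison of $A_\infty$ conditions in \cite{KK:11}. Your route buys transparency about where each hypothesis enters, which is the metric-space subtlety the paper alludes to. The direction reverse H\"older $\Rightarrow$ \eqref{eq:Ainfty} needs no doubling. Doubling of $\mu$ enters through the dyadic doubling of the stopping cubes and the comparability of cubes with balls. Doubling of $\nu$ is used three times: for absolute continuity (Lebesgue differentiation for $\nu$), for transferring \eqref{eq:Ainfty} from balls to cubes, and for the final comparison $\ave{w}_{Q_0}^\mu\lesssim\ave{w}_B^\mu$.

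One sentence of yours should be made precise: the transfer to cubes must go through the inner ball $B_Q^-\subset Q$, not the outer one. Let $C$ and $c$ be constants coming from doubling, with $\mu(Q)\leq C\mu(B_Q^-)$ and $\nu(B_Q^-)\geq c\,\nu(Q)$. Suppose $\mu(E)\leq(\eps/C)\mu(Q)$. Applying \eqref{eq:Ainfty} to $E\cap B_Q^-$ gives $\nu(B_Q^-\setminus E)\geq\delta\nu(B_Q^-)\geq c\delta\,\nu(Q)$, hence $\nu(E)\leq(1-c\delta)\nu(Q)$. Applying it to an outer ball $B_Q^+\supset Q$ only yields $\nu(E)\leq(1-\delta)\nu(B_Q^+)$. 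That says nothing, since $\nu(B_Q^+)$ may exceed $\nu(Q)$ by a constant factor.

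With this fixed, the rest goes through as you describe, with $\beta<c\delta$ fixed and $s$ small: the key density estimate, the stopping argument, the absorption after truncation, and the return from $Q_0$ to $B$.
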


Equipped with these tools, we are ready to give:

\begin{proof}[Proof of Proposition \ref{prop:Osc(mu)=Osc(nu)}]
Let $p,r\in(0,\infty)$ and $q\in(0,\infty]$.
Let $B$ be an arbitrary metric ball, and let $w$ and $t>1$ be as in Proposition \ref{prop:KK}. Then
\begin{equation*}
\begin{split}
  \operatorname{osc}_{r,\nu}(f;B)
  &=\inf_c\Big(\frac{1}{\nu(B)}\int_{B}\abs{f-c}^r w\ud\mu\Big)^{\frac1r} \\
  &=\inf_c\Big(\frac{\mu(B)}{\nu(B)}\Big)^{\frac1r}\Big(\fint_{B}\abs{f-c}^r w\ud\mu\Big)^{\frac1r} \\
  &\leq\inf_c\Big(\frac{\mu(B)}{\nu(B)}\Big)^{\frac1r}\Big(\fint_{B}\abs{f-c}^{rt'}\ud\mu\Big)^{\frac{1}{rt'}}
     \Big(\fint_{B} w^t\ud\mu\Big)^{\frac{1}{tr}} \\
  &\lesssim\inf_c\Big(\frac{\mu(B)}{\nu(B)}\Big)^{\frac1r}\Big(\fint_{B}\abs{f-c}^{rt'}\ud\mu\Big)^{\frac{1}{rt'}}
     \Big(\frac{\nu(B)}{\mu(B)}\Big)^{\frac1r}
     \quad\text{by Proposition \ref{prop:KK}} \\
   &=\operatorname{osc}_{rt',\mu}(f;B).
\end{split}
\end{equation*}
Using this with $B=B_Q$ for each $Q\in\mathscr D$ and taking the $\ell^{p,q}$ norm over $\mathscr D$, we obtain
\begin{equation*}
  \Norm{f}{\operatorname{Osc}^{p,q}_r(\nu)}
  \lesssim\Norm{f}{\operatorname{Osc}^{p,q}_{rt'}(\mu)}
  \lesssim\Norm{f}{\operatorname{Osc}^{p,q}_{r}(\mu)}
\end{equation*}
using Lemma \ref{lem:Osc(r)=Osc(s)} with $s=rt'$ in the last step.

By taking complements, it is immediate to see that the assumed $A_\infty$ relation \eqref{eq:Ainfty} is symmetric in $\mu$ and $\nu$. Thus we also obtain
\begin{equation*}
  \Norm{f}{\operatorname{Osc}^{p,q}_r(\mu)}
  \lesssim\Norm{f}{\operatorname{Osc}^{p,q}_{r}(\nu)}.
\end{equation*}
A combination of the two bounds shows that
\begin{equation*}
  \Norm{f}{\operatorname{Osc}^{p,q}_r(\mu)}\sim\Norm{f}{\operatorname{Osc}^{p,q}_r(\nu)},
\end{equation*}
and taking $r=1$ completes the proof.
\end{proof}

\section{Proof of Theorem \ref{thm:Sp-nu} on general commutators}\label{sec:Sp-nu}

The special case $\mu=\nu$ of Theorem \ref{thm:Sp-nu} is contained in \cite[Theorem 1.1]{Hyt:Sp}. The proof of Theorem \ref{thm:Sp-nu} as stated is a combination of Proposition \ref{prop:Osc(mu)=Osc(nu)} and main ingredients of the proof of \cite[Theorem 1.1]{Hyt:Sp} from \cite{Hyt:Sp,HK:W1p}, as we will now detail.

In the notation of \cite[Theorem 1.1]{Hyt:Sp}, assumption \eqref{it:sep-dim} of Theorem \ref{thm:Sp-nu} says that $\Delta=d$. Combined with assumption \eqref{it:CZ} of Theorem \ref{thm:Sp-nu}, it follows that
\begin{equation*}
  \frac{\eta}{\Delta}+\frac 12>\frac{1}{d}(1-\frac{d}{2}) + \frac12=\frac 1d,\qquad
  \Big(\frac{\eta}{\Delta}+\frac 12\Big)^{-1}<d
\end{equation*}
Hence, the index $p(\eta)$ of \cite[(1.9)]{Hyt:Sp} satisfies
\begin{equation}\label{eq:p(eta)<d}
  p(\eta):=\max\Big\{1,\Big(\frac{\eta}{\Delta}+\frac 12\Big)^{-1}\Big\}
  \quad\begin{cases} <d, & \text{if }d>1, \\ =1, & \text{if }d=1.\end{cases}
\end{equation}

For all $p\in(p(\eta),\infty)\subset(1,\infty)$ and $q\in[1,\infty]$, we obtain
\begin{equation}\label{eq:Spq-Osc-pq}
\begin{split}
  \Norm{[b,T]}{S^{p,q}(L^2(\mu))}
  &\sim\Norm{b}{\operatorname{Osc}^{p,q}(\mu)}\quad\text{by \cite[Proposition 1.26]{Hyt:Sp}} \\
  &\sim\Norm{b}{\operatorname{Osc}^{p,q}(\nu)}\quad\text{by Proposition \ref{prop:Osc(mu)=Osc(nu)}}.  \\
  &\sim\Norm{b}{\dot B^p(\nu)}\quad\text{by \cite[Proposition 11.5]{Hyt:Sp}, if }p=q.
\end{split}
\end{equation}

\eqref{it:p>d}: By \eqref{eq:p(eta)<d}, we have $p(\eta)\leq d$ for all $d\in[1,\infty)$.
Hence, we can apply \eqref{eq:Spq-Osc-pq} with $p=q\in(d,\infty)\subseteq(p(\eta),\infty)$ to conclude that
\begin{equation*}
  \Norm{[b,T]}{S^{p}(L^2(\mu))}
  \sim\Norm{b}{\dot B^p(\nu)}.
\end{equation*}

\eqref{it:p>dA}: 
If $\nu$ is Ahlfors $d$-regular, it is immediate that
  $\Norm{b}{\dot B^p(\nu)}\sim\Norm{b}{\dot B^{\frac{d}{p}}_{p,p}(\nu)}.$

\eqref{it:p<d}: The direction ``$\Leftarrow$'' is obvious. Turning to ``$\Rightarrow$'', if $[b,T]\in S^p(L^2(\mu))$ for some $p\in(0,d]$, then $[b,T]\in S^d(L^2(\mu))$ by the standard containments between the $S^p$ spaces. Noting that we now assume $d>1$, it follows from \eqref{eq:p(eta)<d} that $d\in(p(\eta),\infty)$, and hence
\begin{equation*}
  \Norm{[b,T]}{S^d(L^2(\mu)}
  \sim \Norm{b}{\dot B^d(\nu)}
    \quad\text{by \eqref{eq:Spq-Osc-pq} with }p=q=d\in(p(\eta),\infty).
\end{equation*}
Under the assumptions of lower dimension $d$ and $(1,d)$-Poincar\'e inequality made in part \eqref{it:p<d}, \cite[Theorem 1.5(2)]{HK:W1p} says that $\Norm{b}{\dot B^d(\nu)}<\infty$ only if $b$ is constant a.e.

\eqref{it:p=d}: We recall from \cite{HK:W1p} the auxiliary function and measure
\begin{equation*}
  m_b^\nu(x,t):=\fint_{B(x,t)}\abs{b-\ave{b}_{B(x,t)}^\nu}\ud\nu,\quad
  \ud\nu_d(x,t):=t^{-d-1}\ud t\ud\nu(x),
\end{equation*}
where $(x,t)\in X\times(0,\infty)$. Recalling again that $d>1$ and hence $d\in(p(\eta),\infty)$ by \eqref{eq:p(eta)<d}, we obtain
\begin{equation*}
\begin{split}
  \Norm{[b,T]}{S^{d,\infty}(L^2(\mu))}
  &\sim \Norm{b}{\operatorname{Osc}^{d,\infty}(\nu)}
    \quad\text{by \eqref{eq:Spq-Osc-pq} with }(p,q)=(d,\infty) \\
  &\sim \Norm{m_b}{L^{d,\infty}(\nu_d)}
  \quad\text{by}\begin{cases}
      \text{Ahflors $d$-regularity and} \\ \text{\cite[Proposition 1.29(2)]{Hyt:Sp}}
    \end{cases} \\
  &\sim \Norm{b}{\dot M^{1,d}(\nu)}
  \quad\text{by}\begin{cases}
      \text{the $(1,d_-)$-Poincar\'e inequality and} \\ \text{\cite[Theorem 1.1 and Remark 2.9]{HK:W1p}}.
    \end{cases}
\end{split}
\end{equation*}
Concerning the last step, \cite[Theorem 1.1]{HK:W1p} is stated assuming the $(1,d)$-Poincar\'e inequality and completeness of $(X,\rho)$, which imply the $(1,d_-)$-Poincar\'e inequality for some $d_-\in[1,d)$ by \cite{KZ:08}; but it is observed in \cite[Remark 2.9]{HK:W1p} that only this implied condition is actually used in the proof of \cite[Theorem 1.1]{HK:W1p}.

\eqref{it:A2}: In place of the first line of \eqref{eq:Spq-Osc-pq}, we use
\begin{equation}\label{eq:A2-step}
  \Norm{[b,T]}{S^{p,q}(L^2(w))}\sim\Norm{b}{\operatorname{Osc}^{p,q}(\mu)},
\end{equation}
which is also contained in \cite[Proposition 1.26]{Hyt:Sp}. Noting that the right-hand side of \eqref{eq:A2-step} is identical with the right-hand side of the first line of \eqref{eq:Spq-Osc-pq}, the rest of the proof is identical to that already given.

This completes the proof of Theorem \ref{thm:Sp-nu}.\qed

\section{Proof of Corollary \ref{cor:Bessel} in the Bessel setting}\label{sec:Bessel}

The proof consists of simply verifying the assumptions of the general Theorem \ref{thm:Sp-nu} in the special case under consideration. Let $d:=n+1\in[1,\infty)$ and $(X,\rho):=(\R_+^{n+1},\abs{x-y})$.

\eqref{it:mu-nu}: The Lebesgue measure $\ud\nu:=\ud x$ is evidently doubling, and it is easy to check this for the Bessel measure $\ud\mu:=dm_\lambda:=x_{n+1}^{2\lambda}\ud x$ as well. Also, for the power weight $w=x_{n+1}^{2\lambda}$ on $\R_+^{n+1}$, one easily checks that $w\in A_p(\R_+^{n+1})$ as soon as $p>2\lambda+1$. In particular, we have $w\in A_\infty(\R_+^{n+1})$, and one of the many equivalent characterisations of this is \eqref{eq:Ainfty}.

\eqref{it:sep-dim}: It is easy and well known that the Euclidean half-space $(\R_+^{n+1},\abs{x-y})$ satisfies this geometric property with $d=n+1$.

\eqref{it:CZ}: The Bessel--Riesz transforms are standard kernels of (the best possible) H\"older exponent $\eta=1$ on $(X,\rho,\mu)=(\R_+^{n+1},\abs{x-y},m_\lambda)$; this follows from \cite[Lemma 2.5]{FLLX}. The non-degeneracy of these kernels is verified in \cite[Lemma 2.6]{FLLX}. Note that $(1-\frac{d}{2})_+=(1-\frac{n+1}{2})_+\in\{0,\frac12\}$, where both values are strictly less than $\eta=1$, as required.

Since $\ud m_\lambda=x_{n+1}^{2\lambda}\ud x$ and the weight $x_{n+1}^{2\lambda}$ is locally bounded, it follows that $b\in L^1_{\loc}(\R_+^{n+1})\subset L^1_{\loc}(m_\lambda)=L^1_{\loc}(\mu)$. Thus all common assumptions of Theorem \ref{thm:Sp-nu} are verified, and we proceed to the assumptions in the different cases. Note that these case-wise assumptions only involve the easy space $(X,\rho,\nu)=(\R_+^{n+1},\abs{x-y},\ud x)$ with the Lebesgue measure.

Depending on the case, $(\R_+^{n+1},\abs{x-y},\ud x)$ should be either Ahlfors $d$-regular or have lower dimension $d$, which is weaker. This regularity is evident for the Lebesgue measure on $\R_+^{n+1}$ with $d=n+1$. In the last two cases of Theorem \ref{thm:Sp-nu}, we need $d=n+1>1$, which is fine, since we also assume $n\geq 1$ in the corresponding cases of Corollary \ref{cor:Bessel}. Finally, in these cases where $d=n+1>1$, the space $(\R_+^{n+1},\abs{x-y},\ud x)$ should have either the $(1,d)$-Poincar\'e inequality or the stronger $(1,d_-)$-Poincar\'e inequality for some $d_-\in[1,d)$. But it is classical that $(\R_+^{n+1},\abs{x-y},\ud x)$ satisfies even the best possible $(1,1)$-Poincar\'e inequality.

Thus all assumptions of Theorem \ref{thm:Sp-nu} are verified, and hence its conclusions are valid. In cases \eqref{it:p>dA} and \eqref{it:p<d}, it is immediate that they reduce to the corresponding claims \eqref{it:p>n+1} and \eqref{it:p<n+1} of Corollary \ref{cor:Bessel}. In case \eqref{it:p=d}, it suffices to observe that
\begin{equation*}
  \Norm{b}{\dot M^{1,p}(\R_+^{n+1})}\sim
  \Norm{b}{\dot W^{1,p}(\R_+^{n+1})},\qquad p\in(1,\infty],
\end{equation*}
by \cite[Theorem 1]{Hajlasz:96} and take $p=n+1$, recalling that $n+1>1$ under the assumptions of this case. (Strictly speaking, \cite[Theorem 1]{Hajlasz:96} is stated for the full space or a bounded domain in place of the half space $\R_+^{n+1}$, but it is noted immediately below that the theorem ``can be stated in much more general form'', and indeed the case of a half space easily follows along the same lines.)

This completes the proof of Corollary \ref{cor:Bessel}.\qed

\section{Final comments}\label{sec:comments}

The motivation for this investigation came from \cite{FLSZ}, which showed that the abstract framework of \cite{Hyt:Sp} was not broad enough to capture interesting and relevant results in the concrete Bessel setting. By extending the said framework via the use of two different measures, we have managed to view \eqref{eq:p=n+1}, originally obtained as \cite[Theorem 1.2]{FLSZ}, as an instance of general real-variable theory in abstract spaces. Having completed the proof, let us briefly compare our approach with that of \cite{FLSZ}.

In \cite{FLSZ}, two proofs of the upper bound $\lesssim$ in \eqref{eq:p=n+1} are given. The first approach, given in \cite[Sections 3--5]{FLSZ} is based on the following main ingredients:
\begin{enumerate}[\rm(a)]
  \item the corresponding result for the classical Riesz transforms $R_k$ from \cite{LMSZ}; 
  \item explicit formulas relating the kernels of $R_{\lambda,j}$ and $R_k$; and
  \item recent deep results on Schur multipliers from \cite{CGPT} to deduce Schatten bounds for the operators from the obtained pointwise relations of their kernels.
\end{enumerate}
The second approach, in \cite[Sections 6]{FLSZ}, only applies to $n\geq 2$. It depends on:
\begin{enumerate}[\rm(a)]\setcounter{enumi}{3}
  \item an explicit computation of certain $S^2$ (Hilbert--Schmidt) norms by the Fourier--Bessel transform and Fredholm's trace formula;
  \item abstract Cwikel estimates (generalisations of the classical \cite{Cwikel}) from \cite{LSZ:20} to lift the $p=2$ bounds to $p\in(2,\infty)$; and 
  \item explicit computations with the Leibniz rule to pass over some derivatives from the Bessel--Riesz transforms $\partial_j(-\Delta_\lambda)^{-\frac12}$ to the multiplier $f$.
\end{enumerate}

In contrast to these, our approach was built on:
\begin{enumerate}[\rm(a)]\setcounter{enumi}{6}
  \item\label{it:HytSp} a characterisation of Schatten bounds of $[b,T]$ in terms of the $\operatorname{Osc}^{p,q}(\mu)$ norms for general singular integrals $T$ over abstract spaces $(X,\rho,\mu)$, from \cite{Hyt:Sp,RS:NWO};
  \item\label{it:FLLX} verification of the kernel bounds of the general theory for $R_{\lambda,j}$, from \cite{FLLX};
  \item identification of $\operatorname{Osc}^{d,\infty}(\nu)$ with $\dot M^{1,d}(\nu)$ for ``nice'' $\nu$, from \cite{Hyt:Sp,HK:W1p}; and
  \item\label{it:invar} coincidence of $\operatorname{Osc}^{p,q}(\mu)$ and $\operatorname{Osc}^{p,q}(\nu)$ for $A_\infty$-equivalent measures $\mu,\nu$.
\end{enumerate}

In summary, both approaches of \cite{FLSZ} use some deep tools of non-commutative analysis (either the Schur multiplier theorem of \cite{CGPT} or the abstract Cwikel estimates of \cite{LSZ:20}), and the specifics of the Bessel setting are somewhat present throughout the argument. In contrast, our approach is based on real-variable harmonic analysis and the use of the specifics of the Bessel setting is essentially restricted to showing that this setting fits into the general theory in \eqref{it:FLLX}.

Of course, proving any results about the Schatten $S^{p,q}$ spaces, also known as non-commutative $\ell^{p,q}$ spaces, non-commutative analysis cannot be completely avoided, and indeed some results about the $S^{p,q}$ spaces like interpolation and duality are hidden in \eqref{it:HytSp}. Nevertheless, these are arguably lighter (and certainly more classical) aspects of the non-commutative theory than the tools used in \cite{FLSZ}. Besides the new results of this work, providing a common extension of \cite[Theorem 1.1]{Hyt:Sp} and \cite[Theorem 1.2]{FLSZ}, we think that our approach makes the existing \cite[Theorem 1.2]{FLSZ} more approachable to harmonic analysts without a non-commutative background.


\end{document}